\documentclass[12pt,a4wide]{article}
\usepackage{amsmath, amsfonts, amssymb, amsthm, euscript, amscd, latexsym, bm, mathrsfs}
  \usepackage[pdftex,colorlinks=true,
                       pdfstartview=FitV,
                       linkcolor=blue,
                       citecolor=blue,
                       urlcolor=blue,
           ]{hyperref}
 \usepackage[dvips]{color}
 \usepackage{titlesec}
 \sloppy
 \frenchspacing
 \oddsidemargin=1.9cm
 \hoffset=-0.5cm
 \voffset=-1cm
\def\eee#1{ \begin{equation} #1 \end{equation} }
\def\aa#1{ \begin{align*} #1 \end{align*} }
\def\aaa#1{ \begin{align} #1 \end{align} }
\def\mm#1{ \begin{multline*} #1 \end{multline*} }
\def\mmm#1{ \begin{multline} #1 \end{multline} }

\newtheorem{thm}{\sc Theorem}
\newtheorem{lem}{\sc Lemma}
\newtheorem{cor}{\sc Corollary}
\newtheorem{df}{\sc Definition}

\newcommand{\ts}{\textstyle}
\newcommand{\sss}{\scriptscriptstyle}
\newcommand{\sst}{\scriptstyle}

\newcommand{\eps}{\varepsilon}
\newcommand{\pl}{\partial}
\newcommand{\gt}{\geqslant}
\newcommand{\lt}{\leqslant}

\newcommand{\x}{\times}
\newcommand{\sub}{\subset}

\newcommand{\bu}{\ts\bigcup\limits}
\newcommand{\dl}{\delta}

\newcommand{\gm}{\gamma}
\newcommand{\Gm}{\Gamma}

\newcommand{\sg}{\sigma}

\newcommand{\dd}{\diagdown}
\newcommand{\om}{\omega}
\newcommand{\Om}{\Omega}
\newcommand{\mc}{\mathcal}
\newcommand{\mf}{\mathfrak}

\newcommand{\td}{\tilde}
\newcommand{\mto}{\mapsto}
\newcommand{\lan}{\langle}
\newcommand{\ran}{\rangle}
\newcommand{\<}{\langle}
\renewcommand{\>}{\rangle}

\newcommand{\bl}{\bigl(}
\newcommand{\br}{\bigr)}

\newcommand{\fdot}{\,\cdot\,}
\newcommand{\C}{{\rm C}}

\newcommand{\de}{\left.\right/}

\newcommand{\nn}{n^{_{\pl V}}}
\newcommand{\nm}{n^{_{\pl V}}(x)}

\newcommand{\emp}{\varnothing}

\newcommand{\h}{{\sss H}}
\newcommand{\we}{\wedge}
\newcommand{\co}{\,\text{\LARGE $\lrcorner$}\,}

\DeclareMathOperator{\ind}{\mathbb I}
\def\Rnu{{\mathbb R}}

\def\Nnu{{\mathbb N}}

\def\ffi{\varphi}

\def\suml{\sum\limits}
\def\intl{\int\limits}
\titleformat{\section}[hang]{\large\bfseries}{\thesection.}{1ex}{}{}
\titleformat{\subsection}[hang]{\normalsize\bfseries}{\thesubsection}{2ex}{}{}

\begin{document}

\author{Evelina Shamarova}
\date{}
\title{Differential forms on locally convex spaces
and the Stokes formula}

\maketitle

\vspace{-12mm}

 {\small
  \begin{center}
  \begin{tabular}{l}
  Grupo de F\'isica Matem\'atica, Universidade de Lisboa. \\
  {E-mail: \href{mailto:evelina@cii.fc.ul.pt}{evelina@cii.fc.ul.pt}}
  \end{tabular}
 \end{center}
 }

\begin{abstract}
 We prove a version of the Stokes formula
 for differential forms on locally convex spaces
 announced in~\cite{WS}.
 The main tool used for proving
 this formula
 is the surface layer theorem proved
 in the paper \cite{mine} by the author.
 Moreover, for differential forms
 of a Sobolev-type class relative to a differentiable
 measure~\cite{AvSF}, we compute the operator
 adjoint to the exterior differential
 in terms of standard operations of calculus of
 differential forms and the logarithmic derivative.
 Previously, this connection was established
 under essentially stronger assumptions on the space~\cite{Sm},
 the measure~\cite{Shig}, or smoothness
 of differential forms~\cite{Sham}. See also~\cite{WLS}.
 \end{abstract}

 \section{Calculus on a Sobolev-type class of  dif\-fe\-ren\-tial forms
  on a locally convex space}
  Let $H$ be a Hilbert space
 with the scalar product
 $(\,\cdot\,,\,\cdot\,)$,
 $\{e_n\}_{n=1}^\infty$be an orthonormal basis of $H$. Let
 $\Gm(n)$
  denote the set of increasing sequences of natural numbers
  of length
 $n\in\Nnu$, $\Gm(0)=\{0\}$. If
 $\gm_1\in\Gm(n)$, $\gm_2\in\Gm(m)$, $n\gt m$,
 then we consider the sequences
 $\gm_1\cup\gm_2$ and $\gm_1\dd\gm_2$ as elements of
 $\Gm(m+n)$ and $\Gm(n-m)$ respectively,
 putting them in the increasing order, if necessarily.
  For every $n\in\Nnu$ and
 $\gm=(i_1,\dots,i_n)\in\Gm(n)$, the symbol $e_\gm$ denotes
 $e_{i_1}\we\dots\we e_{i_n}$, $e_0=1$, where the vectors $e_{i_1},\dots,
 e_{i_n}$ are considered as linear continuous functionals
 on $H$
 (the operation $\we$ is defined, for example, in~\cite{Car}).
 By
 $L_n(H)$
 we denote the space of antisymmetric
 \mbox{$n$-li}near
 Hilbert-Schmidt functionals.
 Note that
 $L_n(H)$
 is a Hilbert space with the scalar product
  $(g_1,g_2)_n=\suml_{i_1<\dots<i_n}g_1(e_{i_1},\dots,
 e_{i_n})\, g_2(e_{i_1},\dots, e_{i_n})$,
  and
 $\{e_\gm\}_{\gm\in\Gm(n)}$ is the orthonormal basis in $L_n(H)$.
 Let $\|\fdot\|_n$ denote the norm which corresponds to $(\fdot,\fdot)_n$.
 Let us show that if $f\in L_n(H)$, $g\in L_m(H)$, then $f\we g\in
  L_{m+n}(H)$.
 Indeed, let $e_\gm = e_{i_1}\we\dots\we e_{i_n}$, and let $g(e_\gm)$
 denote $(g,e_\gm)_n = g(e_{i_1}, \dots , e_{i_n})$.
    We obtain:
  \mm{
  \suml_{\gm\in\Gm(m+n)}(f\we g)^2(e_\gm)=
  \suml_{\gm\in\Gm(m+n)}\Bigl(\suml_{\gm_1\in\Gm(n)}
  \eps(\sg)\,f(e_{\gm_1})\,g(e_{\gm\dd\gm_1})\Bigr)^2 \\
  \lt
  C_{m+n}^n \suml_{\gm\in\Gm(m+n)}\suml_{\gm_1\in\Gm(n)}
  f^2(e_{\gm_1})\, g^2(e_{\gm\dd\gm_1})=
  C_{m+n}^n \suml_{
  \substack{
  \gm_1\in\Gm(n),\\
  \gm_2\in\Gm(m):\\
  \gm_1\cup\gm_2=\gm
  }}
 f^2(e_{\gm_1})\,g^2(e_{\gm_2})\\
  \lt C_{m+n}^n
 \suml_{\gm_1\in\Gm(n)}f^2(e_{\gm_1})
 \suml_{\gm_2\in\Gm(m)}g^2(e_{\gm_2})<\infty.
 }
 Analogously to the finite dimensional case~\cite{Str},
 for elements
  $f\in L_m(H)$ and
 $g\in L_n(H)$, $m>n$, one can define the element $g\co f\in
 L_{m-n}(H)$ by the formula $(g\co f,h)_{m-n}=(f,g\we h)_n$,
 which holds for all $h\in L_{m-n}(H)$.
 Let us show that the operation $\co$ is well defined for
  $f\in L_m(H)$ and $g\in L_n(H)$.
 Specifically, we have to show that
 \[
 \suml_{\gm\in\Gm(m-n)}(g\co f,e_\gm)^2_{m-n}=
 \suml_{\gm\in\Gm(m-n)}(f,g\we e_\gm)_m^2<\infty.
 \]
 Note that
 \[
 (g\we e_\gm)(e_{\gm'})=\sum_{
  \substack{
  \gm_1\in\Gm(n),\\
  \gm_2\in\Gm(m-n):\\
  \gm_1\cup\gm_2=\gm
  }
  }
 \eps(\sg)\,g(e_{\gm_1})\,e_\gm(e_{\gm_2})=
 \begin{cases}
 0,   &\gm\not\sub\gm'\\
 \eps(\sg)g(e_{\gm'\dd\gm}), &\gm\sub\gm'
 \end{cases}
 \]
 where $\eps(\sg)$ is the permutation parity of $\sg$.
 Also, we used here
 the definition of the exterior multiplication~\cite{Car}.
 Taking into account the latter relation, we obtain:
  \[
  \suml_{\gm\in\Gm(m-n)}(f,g\we e_\gm)_m^2= \suml_{\gm\in\Gm(m-n)}
  \Bigl(\suml_{\gm'\in\Gm(m)}f(e_{\gm'})\,(g\we e_\gm)(e_{\gm'})
  \Bigr)^2=
  \]
  \mm{
  =\suml_{\gm\in\Gm(m-n)}\Bigl(
  \sum_{\substack{
  \gm'\in\Gm(m):\\
  \gm\sub\gm'
  }
  }
  \eps(\sg)\,f(e_{\gm'})\,
  g(e_{\gm'\dd\gm}) \Bigr)^2
  \\ \lt \suml_{\gm\in\Gm(m-n)}
  \Bigl(\sum_{\substack{
  \sst \gm_1\in\Gm(n):\\
  \sst \gm\cap\gm_1=\emp
  }}
  f^2(e_{\gm\cup\gm_1})
  \sum_{\substack{
  \gm_1\in\Gm(n):\\
  \gm\cap\gm_1=\emp
  }}
  g^2(e_{\gm_1})\Bigr)
  \\ \lt
  \suml_{\gm_1\in\Gm(n)}g^2(e_{\gm_1})
  \sum_{\substack{
  \gm\in\Gm(m-n),\\
  \gm_1\in\Gm(n):\\
  \gm\cap\gm_1=\emp
  }}
  f^2(e_{\gm\cup\gm_1})\lt C_m^n\suml_{\gm_1\in\Gm(n)}g^2(e_{\gm_1})
  \suml_{\gm_2\in\Gm(m)}f^2(e_{\gm_2})<\infty. }

 Now let $X$ be a locally convex space, and the Hilbert
 space $H$ be a vector subspace of
 $X$.
 \begin{df}
 A mapping $f~:\, X\to L_n(H)$ is called
 a differential form of degree
 $n$ (or differential $n$-form)  on $X$.
 \end{df}
 Note that every differential form
 $f$ can be presented as:
 $f\!=\!\suml_{\gm\in\Gm(n)} f_\gm\,e_\gm$, where $f_\gm$
 are real-valued functions.
 The operations of exterior and interior multiplications
 are defined for differential forms  pointwise.

 Let  $f$ be a differential form of degree $n$.
 \begin{df}
  We say that
  $f$ possesses a differential if its coefficients
  $f_\gm$ are differentiable in each direction
  $e_p$  for  $p\notin\gm$, and for all  $x\in X$,
  $\suml_{\gm\in\Gm(n),\, p\notin\gm}d_{e_p}f_\gm(x)\,e_p\we e_\gm
  \in L_{n+1}(H)$.
 The differential $(n+1)$-form
  \[
  df=\suml_{\gm\in\Gm(n),\, p\notin\gm}d_{e_p}f_\gm\,e_p\we
  e_\gm
  \]
  is called the differential of $f$.
 \end{df}
 \begin{df}
 We say that
 $f$ possesses a codifferential
 if its coefficients
 $f_\gm$
 are differentiable in each direction
 $e_p$ for $p\in\gm$, and for all $x\in X$,
 $\suml_{p\in\gm\in\Gm(n)}d_{e_p}f_\gm(x)\,
 e_p \co e_\gm\in L_{n-1}(H)$.
 The differential $(n-1)$-form
  \[
  \dl f=\suml_{p\in\gm\in\Gm(n)}d_{e_p}f_\gm\, e_p\co e_\gm
  \]
  is called the codifferential of $f$.
  \end{df}
  Let $\mf B_{\sss X}$ be the $\sg$-algebra
  of Borel subsets of the space $X$.
  A measure on $X$ means a $\sg$-additive Hilbert space valued
  function on $\mf B_{\sss X}$.
 \begin{df}
 A $\sg$-additive $L_n(H)$-valued measure on $X$
 is called a differential form
 of codegree~$n$.
 \end{df}
 Every differential form $\om$ of codegree $n$
 can be decomposed as:
  $\om=\suml_{\gm\in\Gm(n)}\om_\gm e_\gm$ where
 $\om_\gm$ are real-valued  $\sg$-additive measures.
  \begin{df}
  Let  $g$ be a bounded differential form of degree $m$,
  $\om$ be a differential form of codegree $n\gt m$
  which is a measure of bounded variation.
  The differential form
  $g\we\om$ of codegree
  $n-m$ defined as
  \[
  (g\we\om)(A)=\int_A g(x)\!\co \om(dx)
  \]
  is called the exterior product of 
  $g$ and $\om$.
  \end{df}
  The differential form $g\we\om$ is well defined.
   Indeed, the differential form
  $\om$ can be presented in the form
  $\om=f\cdot|\om|$ (see \cite{Uhl}),
  where  $|\om|$  denotes the variation of $\om$,
  and $f$ is a differential form of degree
  $n$ such that
  $\|f(x)\|_n=1$ for $|\om|$-almost all $x$.
  We have
  \[
  (g\we\om)(A)=\int_A \bigl(g(x)\! \co f(x)\bigr)|\om|(dx).
  \]
  Further,
  \aa{
  &\suml_{\gm\in\Gm(n-m)}((g\we\om)(A),e_\gm)_{n-m}^2=
  \suml_{\gm\in\Gm(n-m)}
  \left(\int_A \bigl(g(x)\!\co f(x),e_\gm\bigr)_{n-m}|\om|(dx)\right)^2\\
  &\lt |\om|(A)\suml_{\gm\in\Gm(n-m)}
  \int_A \bigl(g(x)\!\co f(x),e_\gm\bigr)^2_{n-m}|\om|(dx)\\
  &=|\om|(A)\int_A \|g(x)\!\co f(x)\|_{n-m}^2|\om|(dx)
  \lt C_n^m \,\bigl(|\om|(A)\bigr)^2\sup_x \|g(x)\|_m<\infty.
  }
  \begin{df}
  We say that the differential form
   $\om$ of codegree  $n$ possesses a
  differential if its coefficients $\om_\gm$
  are differentiable in all directions
  $e_p$ for  $p\in\gm$, and
  $\suml_{p\in\gm\in\Gm(n)}d_{e_p}\om_\gm(A)\,
  e_p\co e_\gm\in L_{n-1}(H)$ for all $A\in \mf B_{\sss X}$.
  The differential form  $d \om$ of codegree $n-1$
 defined by
  \[
  d\om=(-1)^{n-1}\suml_{p\in\gm\in\Gm(n)}d_{e_p}\om_\gm\, e_p\co
  e_\gm,
  \]
  is called the differential of $\om$.
  \end{df}
  \begin{lem}
  \label{lemP}
  Let
   $g$ and $\om$ be differential forms of degree $m$ and codegree
   $n+1 >m$ respectively,
   both possess differentials. Further let $g$ and $\om$ be such that
   $g$ and $dg$ are bounded,  $\om$ and $d\om$
  are of bounded variation.
  Then the differential form
   $g\we\om$ possesses a differential,
   and
  \eee{
  \label{:P}
  d(g\we\om)=g\we d\om+(-1)^n dg\we\om.
  }
  \end{lem}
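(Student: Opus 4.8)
Suppose \eqref{:P} is approached at the level of coefficients. Then it becomes a combination of the classical Leibniz rule for a product of a function and a measure with the cocycle identity for shuffle permutation parities, and the existence of $d(g\we\om)$ comes for free once the right‑hand side is recognized as a well‑defined differential form. So the plan is: (i) write down the coefficient expansion of $g\we\om$; (ii) differentiate it term by term; (iii) reorganize the result into the two summands on the right of \eqref{:P}.

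First I would fix notation: $g=\suml_{\gm\in\Gm(m)}g_\gm e_\gm$ with bounded differentiable coefficients, and $\om=\suml_{\mu\in\Gm(n+1)}\om_\mu e_\mu$ with signed measures $\om_\mu$ of bounded variation (each $|\om_\mu|\lt|\om|$). From the definition of $\co$ together with the exterior‑multiplication formula $e_\gm\we e_\nu=\eps(\gm,\nu)\,e_{\gm\cup\nu}$ for disjoint $\gm,\nu$ (and $0$ otherwise), where $\eps(\,\cdot\,,\,\cdot\,)$ denotes the parity of the permutation ordering the concatenation of two increasing tuples increasingly, one gets $e_\gm\co e_\mu=\eps(\gm,\mu\dd\gm)\,e_{\mu\dd\gm}$ if $\gm\sub\mu$ and $0$ otherwise. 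Substituting the expansions of $g$ and $\om$ into $(g\we\om)(A)=\intl_A g(x)\co\om(dx)$ and reindexing by $\nu=\mu\dd\gm$ gives, for $\nu\in\Gm(n+1-m)$,
\[
(g\we\om)_\nu=\suml_{\substack{\gm\in\Gm(m):\\ \gm\cap\nu=\emp}}\eps(\gm,\nu)\,\bl g_\gm\!\cdot\om_{\gm\cup\nu}\br,
\]
where $g_\gm\!\cdot\om_{\mu}$ is the signed measure $A\mto\intl_A g_\gm\,d\om_{\mu}$, of variation at most $\sup|g_\gm|\cdot|\om_{\mu}|$; the whole series lies in $L_{n+1-m}(H)$ by the computation following Definition~5.

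Second, I would differentiate term by term. If $p\in\nu$ then $p\notin\gm$ while $p\in\gm\cup\nu$, so $g_\gm$ is differentiable along $e_p$ because $g$ possesses a differential and $\om_{\gm\cup\nu}$ is differentiable along $e_p$ because $\om$ possesses a differential; hence
\[
d_{e_p}\bl g_\gm\!\cdot\om_{\gm\cup\nu}\br=\bl d_{e_p}g_\gm\br\!\cdot\om_{\gm\cup\nu}+g_\gm\!\cdot\bl d_{e_p}\om_{\gm\cup\nu}\br,
\]
which follows from the definition of the directional derivative of a measure by the standard add‑and‑subtract argument, using the boundedness of $g_\gm$ and $d_{e_p}g_\gm$. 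Estimates of the same type as in the well‑definedness computations above allow interchanging $d_{e_p}$ with the sum over $\gm$ (here the hypotheses that $dg$ is bounded and $d\om$ of bounded variation are used), so that $d_{e_p}(g\we\om)_\nu$ exists and splits into a part $P$ carrying the factors $d_{e_p}g_\gm$ and a part $Q$ carrying the factors $d_{e_p}\om_{\gm\cup\nu}$.

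Third, I would reassemble. Substituting into $d(g\we\om)=(-1)^{n-m}\suml_{p\in\nu\in\Gm(n+1-m)}d_{e_p}(g\we\om)_\nu\,e_p\co e_\nu$ and using $e_p\co e_\nu=\eps(\{p\},\nu\dd\{p\})\,e_{\nu\dd\{p\}}$, I treat the two halves separately. For the $P$‑half I put $\gm'=\gm\cup\{p\}\in\Gm(m+1)$ and $\td\nu=\nu\dd\{p\}\in\Gm(n-m)$, which turns the summation over $(\gm,p,\nu)$ into one over $(\gm',p,\td\nu)$ with $p\in\gm'$ and $\gm'\cap\td\nu=\emp$, and I compare with the coefficient expansion $(dg\we\om)_{\td\nu}=\suml_{\gm'\cap\td\nu=\emp}\eps(\gm',\td\nu)\,\bl(dg)_{\gm'}\!\cdot\om_{\gm'\cup\td\nu}\br$, where $(dg)_{\gm'}=\suml_{p\in\gm'}\eps(\{p\},\gm'\dd\{p\})\,d_{e_p}g_{\gm'\dd\{p\}}$. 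The product of the three shuffle parities then in play is rearranged by the cocycle identity $\eps(A,B\cup C)\,\eps(B,C)=\eps(A\cup B,C)\,\eps(A,B)$ for pairwise disjoint increasing tuples, together with the transposition rule $\eps(\beta,\{p\})=(-1)^{|\beta|}\eps(\{p\},\beta)$; it collapses precisely to the global sign $(-1)^n$, giving the $P$‑half $=(-1)^n\,dg\we\om$. An entirely parallel reindexing of the $Q$‑half, compared against $(g\we d\om)_{\td\nu}$ and against $(d\om)_\rho=(-1)^n\suml_{q\notin\rho}\eps(\{q\},\rho)\,d_{e_q}\om_{\{q\}\cup\rho}$ and using the cocycle identity once more, gives the $Q$‑half $=g\we d\om$. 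Adding the two halves yields \eqref{:P}. Finally, under the hypotheses each term on the right of \eqref{:P} is a genuine differential form of codegree $n-m$ ($dg\we\om$ because $dg$ is bounded and $\om$ of bounded variation; $g\we d\om$ because $g$ is bounded and $d\om$ of bounded variation, cf.\ Definition~5), hence so is their sum; consequently the left‑hand sum lies in $L_{n-m}(H)$ and $g\we\om$ possesses a differential. I expect the only genuinely delicate point to be the bookkeeping of these shuffle parities in the last step, which is the infinite‑dimensional counterpart of the sign computation in the finite‑dimensional Leibniz rule for differential forms.
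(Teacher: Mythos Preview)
Your proposal is correct and follows essentially the same route the paper indicates: expand $g\we\om$ in the basis $\{e_\gm\}$, differentiate the coefficients using the product rule for a function times a measure, and compare the resulting coefficients with those of $g\we d\om$ and $dg\we\om$. The paper's own justification of the lemma is a one-sentence remark to ``use the definitions of differentials for $g$ and $\om$, the definition of the operation $\we$, and compare the coefficients at each $e_\gm$''; you have simply spelled out that comparison, including the shuffle-parity bookkeeping and the well-definedness verifications, in more detail than the paper provides.
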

  The equality~\eqref{:P} can be easily obtained. Indeed, one
  should use the definitions of differentials for
  $g$ and $\om$, the definition of the operation $\we$,
 and compare the coefficients at each $e_\gm$.

\section{The operator adjoint  to the differential}
  Now we compute the operator adjoint to the operator
  $d$ for differential forms of a Sobolev-type class
  relative to a real- or complex-valued
  $\sg$-additive measure on $X$. Let $\mu$ be such a measure.
  We assume that $\mu$ is differentiable
  in each direction
  $e_p$~\cite{AvSF}, and  for all $x$,
  $\|\beta^\mu(x)\|_\h<\infty$, where $ \beta^\mu(x)=\sum_p
  \beta_{e_p}^\mu (x)\,e_p $, and $\beta_{e_p}^\mu$
  is the logarithmic  derivative of the measure
  $\mu$ in the direction
  $e_p$~\cite{AvSF,SmW}.
  Further let the numbers
  $p>1$ and $q>1$ be such that
  $1/p + 1/q=1$. By $\Om_p^n$, we denote the vector space
  of differential $n$-forms
  $f$ satisfying the condition $\int_X\|f(x)\|_n^p\,\mu(dx)<\infty$.
  Define a norm on $\Om_p^n$
  by
  \[
  \|f\|_{n,p}=\left(\int_X \|f(x)\|_n^p\,\mu(dx)\right)^{1\!\de p} \hspace{-2mm}.
  \]
  For elements
  $f=\suml_{\gm\in\Gm(n)}f_\gm(x)\,e_\gm\in\Om_p^n$ and
  $\om=\suml_{\gm\in\Gm(n)}\om_\gm(x)\,e_\gm\in\Om_q^n$,
  we define the bilinear operation
  \eee{
    \label{22}
  \lan\om,f\ran_n=\int_X\bl\om(x),f(x)\br_n\,\mu(dx).
   }
  The integral on the right-hand side exists by
  H\"older's inequality and by the definition of
  $\Om_p^n$.
  By the definition of the scalar product
  $(\cdot,\cdot)_n$ and by Lebesgue's theorem,
  we rewrite~\eqref{22}:
  \aa{
  \lan\om,f\ran_n=\suml_{\gm\in\Gm(n)}\int_X f_{\gm}(x)
  \,\om_\gm(x)\,\mu(dx).
  }
  Let
  $A_p^n$
  be the vector subspace of
  $\Om_p^n$,
  consisting of differential forms
  $f$ possessing the codifferential  $\dl f \in \Om_p^{n-1}$,
  satisfying the inequality
  \eee{
  \label{24}
  \int_X\|\beta^\mu(x)\|_\h^p\,\|f(x)\|_n^p\,\mu(dx)
  <\infty,
  }
  and such that the following condition (i) is fulfilled.
  Condition (i): for every
  $\gm\in\Gm(n)$, there exists a $\dl>0$ and non-negative functions
  $g_\gm(x)$, $g_{1\gm}(x)$, and $g_{2\gm}(x)$, such that
  $g_\gm(x)$ is
  \mbox{$d_{e_p}\mu$-sum}mable for every $p\notin\gm$,
  $g^2_{1\gm(x)}$ and $g^2_{2\gm}(x)$ are \mbox{$\mu$-sum}mable,
  and for all
  $p\notin\gm$, for $|t|<\dl$,
  $|f_\gm(x+te_p)|<\min\{g_\gm(x),g_{1\gm}(x)\}$ and
  $|d_{e_p}f_\gm(x+te_p)|<g_{2\gm}(x)$.
  On $A_p^n$ we define the norm
  \[
  \|f\|_{A_p^n}= \|f\|_{n,p}+ \|\dl f\|_{n-1,p}
  +\left(\int_X\|\beta^\mu(x)\|^p_\h\,\|f(x)\|_n^p\,
  \mu(dx)\right)^{1\!\de p} \hspace{-3mm}.
  \]
  Further let
  $B_q^n$ be the vector subspace of
  $\Om_q^n$ consisting of differential forms $\om$
  possessing the differential $d\om \in \Om_q^{n+1}$
  and satisfying the following condition (ii).
  Condition (ii):
   for every
  $\gm\in\Gm(n)$ there exists
  a $\dl>0$ and non-negative functions  $g_\gm(x)$,
  $g_{1\gm}(x)$, $g_{2\gm}(x)$, such that  $g_\gm^2(x)$
  \mbox{$d_{e_q}\mu$-sum}mable for all  $q\in\gm$,
  $g_{1\gm}^2(x)$ and $g_{2\gm}^2(x)$ are \mbox{$\mu$-sum}mable,
  and for all
  $q\in\gm$, for $|t|<\dl$,
  $|\om_\gm(x+tå_q)|<\min\{g_\gm(x),g_{1\gm}(x)\}$ and
  $|d_{e_q}\om_\gm(x+te_q)|<g_{2\gm}(x)$.
  On $B_q^n$ we define the norm
  \[
  \|\om\|_{B_q^n}=\|\om\|_{n,q}+\|d\om\|_{n+1,q}.
  \]
  It is clear that
  $d:\, B^n_q\to\Om_q^{n+1}$ is a linear continuous operator.
  Conditions (i) and (ii) are necessary to satisfy the assumptions
  of the integration by parts formula proved in \cite{AvSF} which we apply
  to compute the operator $d^*$.
  \begin{thm}
  For every pair of elements $f\in A_p^{n+1}$ and $\om\in B_q^n$,
  $1/p+1/q=1$, the element $-\beta^\mu\!\co f-\dl f$
  belongs to
  $\Om_p^n$, and
  \[
  \lan d\om,f\ran_{n+1}=\lan\om,-\beta^\mu\!\co f-\dl f\ran_n,
  \]
  i.e. the adjoint operator $d^*:\, A_p^{n+1}\to \Om_p^n$
  is represented by the formula:
   \[
   d^*=-(\beta^\mu\!\co+\dl).
   \]
  \end{thm}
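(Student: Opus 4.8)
The plan is to split the statement into the membership claim $-\beta^\mu\co f-\dl f\in\Om_p^n$ and the duality identity $\lan d\om,f\ran_{n+1}=\lan\om,-\beta^\mu\co f-\dl f\ran_n$; the formula $d^*=-(\beta^\mu\co+\dl)$ is then immediate. I would settle the membership first. That $\dl f\in\Om_p^n$ is part of the definition of $A_p^{n+1}$. For $\beta^\mu\co f$, since $\beta^\mu(x)\in L_1(H)$ and $f(x)\in L_{n+1}(H)$, the Hilbert--Schmidt estimate for the interior product established above gives $\|\beta^\mu(x)\co f(x)\|_n^2\lt C_{n+1}^1\,\|\beta^\mu(x)\|_\h^2\,\|f(x)\|_{n+1}^2$; raising to the power $p/2$ and integrating against $\mu$, assumption~\eqref{24} yields $\int_X\|\beta^\mu(x)\co f(x)\|_n^p\,\mu(dx)<\infty$, i.e.\ $\beta^\mu\co f\in\Om_p^n$. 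Since $\Om_p^n$ is a vector space, $-\beta^\mu\co f-\dl f\in\Om_p^n$, so $\lan\om,-\beta^\mu\co f-\dl f\ran_n$ is finite by H\"older's inequality (as for~\eqref{22}), and $\lan d\om,f\ran_{n+1}$ is finite because $d\om\in\Om_q^{n+1}$, $f\in\Om_p^{n+1}$.

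For the identity I would compute in the basis $\{e_\gm\}$. By the definition of $d$, the coefficient of $e_{\gm'}$ in $d\om$ (for $\gm'\in\Gm(n+1)$) is the \emph{finite} sum $\suml_{p\in\gm'}\eps(\sg)\,d_{e_p}\om_{\gm'\dd\{p\}}$, where $\eps(\sg)=\pm1$ is determined by $e_p\we e_{\gm'\dd\{p\}}=\eps(\sg)e_{\gm'}$; equivalently $e_p\co e_{\gm'}=\eps(\sg)e_{\gm'\dd\{p\}}$ for $p\in\gm'$, and $e_p\co e_{\gm'}=0$ otherwise. Hence by the definition of $(\cdot,\cdot)_{n+1}$,
\eee{
\lan d\om,f\ran_{n+1}=\intl_X\,\suml_{\gm'\in\Gm(n+1)}\,\suml_{p\in\gm'}\eps(\sg)\,d_{e_p}\om_{\gm'\dd\{p\}}(x)\,f_{\gm'}(x)\,\mu(dx).
}
As $\suml_{\gm'}|(d\om)_{\gm'}f_{\gm'}|\lt\|d\om(x)\|_{n+1}\|f(x)\|_{n+1}$ is $\mu$-summable by H\"older, I would interchange $\intl_X$ with $\suml_{\gm'}$ and apply the integration by parts formula of~\cite{AvSF} in each direction $e_p$ (licit: $\om_{\gm'\dd\{p\}}$ is differentiable along $e_p$ since $p\notin\gm'\dd\{p\}$, and $f_{\gm'}$ along $e_p$ since $p\in\gm'$, i.e.\ by $f$ possessing a codifferential; conditions (i) and (ii) supply the $\mu$- and $d_{e_p}\mu$-summable majorants that formula needs), turning each $\intl_X d_{e_p}\om_{\gm'\dd\{p\}}f_{\gm'}\,\mu(dx)$ into $-\intl_X\om_{\gm'\dd\{p\}}\bigl(d_{e_p}f_{\gm'}+f_{\gm'}\,\beta^\mu_{e_p}\bigr)\mu(dx)$.

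Re-indexing by $\gm=\gm'\dd\{p\}\in\Gm(n)$ (so $\gm'=\gm\cup\{p\}$, $p\notin\gm$) and interchanging, for fixed $\gm$, the sum over $p\notin\gm$ with $\intl_X$, I would identify the two resulting families with the coefficients of $\dl f$ and $\beta^\mu\co f$: namely $\suml_{p\notin\gm}\eps(\sg)\,d_{e_p}f_{\gm\cup\{p\}}=(\dl f)_\gm$ straight from the definition of the codifferential, and $\suml_{p\notin\gm}\eps(\sg)\,\beta^\mu_{e_p}\,f_{\gm\cup\{p\}}=(\beta^\mu\co f)_\gm$ because $\beta^\mu\co f=\bigl(\suml_p\beta^\mu_{e_p}e_p\bigr)\co f=\suml_p\beta^\mu_{e_p}(e_p\co f)$ and $e_p\co f=\suml_{p\notin\gm}\eps(\sg)\,f_{\gm\cup\{p\}}\,e_\gm$. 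Thus $\lan d\om,f\ran_{n+1}=-\suml_{\gm\in\Gm(n)}\intl_X\om_\gm\bigl((\dl f)_\gm+(\beta^\mu\co f)_\gm\bigr)\mu(dx)=\lan\om,-\beta^\mu\co f-\dl f\ran_n$, as claimed, and the representation of $d^*$ follows.

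The step I expect to be the real obstacle is not this algebra but making the interchanges of summation with integration — and the term-by-term use of the scalar integration by parts formula — rigorous on the infinite-dimensional $X$: the sums over directions $e_p$ are genuinely infinite, both inside $\dl f$, $\beta^\mu\co f$ and in the passage through~\cite{AvSF}, and one must secure their absolute convergence and the applicability of that formula to each product $\om_{\gm'\dd\{p\}}\cdot f_{\gm'}$. This is precisely what the defining conditions (i), (ii) of $A_p^{n+1}$, $B_q^n$ and the bound~\eqref{24} are for: for each $\gm$ they provide non-negative majorants, $\mu$- or $d_{e_p}\mu$-summable as appropriate, dominating $\om_\gm$, $f_\gm$ and the pertinent directional derivatives on short segments $x+te_p$, so that dominated convergence controls both the difference quotients defining $d_{e_p}$ and the truncations of the $p$-sums. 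Once these analytic points are in place, only the elementary $\we$- and $\co$-identities from the first section remain, and the computation closes.
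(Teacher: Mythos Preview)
Your proposal is correct and follows essentially the same route as the paper: first the membership $-\beta^\mu\co f-\dl f\in\Om_p^n$ via the pointwise bound $\|\beta^\mu(x)\co f(x)\|_n\lt\sqrt{n+1}\,\|\beta^\mu(x)\|_\h\|f(x)\|_{n+1}$ (your $C_{n+1}^1=n+1$ is exactly this), then the coordinate expansion of $\lan d\om,f\ran_{n+1}$, the term-by-term integration by parts from~\cite{AvSF}, and the re-indexing $\gm'=\gm\cup\{p\}$ to recognise the coefficients of $\dl f$ and $\beta^\mu\co f$. The only cosmetic difference is that the paper rederives the $\co$-estimate explicitly in the proof rather than citing the Section~1 inequality, and tracks the sign as $(-1)^{k_p-1}$ instead of your $\eps(\sg)$; your identification of conditions (i), (ii) and~\eqref{24} as precisely what licenses the Fubini/Lebesgue interchanges and the use of~\cite{AvSF} is exactly how the paper justifies those steps.
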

  \begin{proof}
  Let  $\om=\!\!\suml_{\gm\in\Gm(n)}\!\!\om_\gm e_\gm$,
  $f=\!\!\!\suml_{\gm_1\in\Gm(n+1)}\!\!\! f_{\gm_1}e_{\gm_1}$. We
  have
   \mmm{
    \label{die Scheisse}
   \|\beta^\mu(x)\co f(x)\|_n \\
    =\Bigl\|\suml_{p\in\gm_1\in\Gm(n+1)}
   \beta_{e_p}^\mu(x)\,f_{\gm_1}(x)\,e_p\co e_{\gm_1}\Bigr\|_n=
   \Bigl\|\suml_{\gm\in\Gm(n)}\Bigl(\suml_{p\notin\gm}\beta_{e_p}^\mu(x)\,
   f_{\gm\cup p}(x)\Bigr)e_\gm\Bigr\|_n\\
   =\sqrt{\suml_{\gm\in\Gm(n)}\Bigl(\suml_{p\notin\gm}
   \beta_{e_p}^\mu(x)\,f_{\gm\cup p}(x)\Bigr)^2}\lt
   \sqrt{\suml_{\gm\in\Gm(n)}\Bigl(\suml_{p\notin\gm}
   (\beta_{e_p}^\mu(x))^2\suml_{p\notin\gm}f^2_{\gm\cup p}(x)\Bigr)} \\
   \lt \sqrt{\suml_{p=1}^\infty (\beta_{e_p}^\mu(x))^2
   \suml_{\gm\in\Gm(n)}\suml_{p\notin\gm}f^2_{\gm\cup p}(x)}\lt
   \sqrt{n+1}\,\|\beta^\mu\|_\h\,
   \sqrt{\suml_{\gm_1\in\Gm(n+1)}f^2_{\gm_1}(x)} \\
   = \sqrt{n+1}\,\|\beta^\mu(x)\|_\h\,\|f(x)\|_{n+1}.
   }
   From this and~\eqref{24} it follows that $\|\beta^\mu\!\co
   f\|_{n,p}<\infty$.
   By~\eqref{24},
   $\|\dl f\|_{n,p}<\infty$, and hence,
   $\|\beta^\mu\!\co f+\dl f\|_{n,p}<\infty$,
   i.e. $-\beta^\mu\!\co f-\dl f\in \Om_p^n$. Next,
   \mm{
   d\om=\suml_{\gm\in\Gm(n),\,p\in\gm}
   d_{e_p}\om_\gm\,e_p\we e_\gm= \suml_{\gm\in\Gm(n),
   p\notin\gm}(-1)^{k_p-1}\,d_{e_p}\om_\gm\, e_{\gm\cup p}\\
   =\sum_{\gm_1\in\Gm(n+1)}
   \Bigl(\sum_{\substack{
   \gm\sub\gm_1,\\p\in\gm_1\dd\gm}} (-1)^{k_p-1}\,
   d_{e_p}\om_\gm\Bigr)\, e_{\gm_1}, }
  where
  $k_p$ is the number of $p$ in the sequence
  $\gm_1$.
  Applying the integration by parts formula~\cite{AvSF}
  (one can easily verify the conditions under which this
  formula holds), we obtain:
   \aa{
   \lan d\om,f\ran_n
   &=\suml_{\gm_1\in\Gm(n+1)}\intl_X
   f_{\gm_1}(x) \Bigl(\suml_{\substack{\gm\sub\gm_1,\\p\in\gm_1\dd\gm}}
   (-1)^{k_p-1}\, d_{e_p}\om_\gm(x)\Bigr)\,\mu(dx)\\
   &=
   \sum_{\substack{
       \gm_1\in\Gm(n+1),\\
       \gm\sub\gm_1,\,p\in\gm_1\dd\gm
   }}
   (-1)^{k_p-1}\,\intl_X f_{\gm_1}(x)\,
   d_{e_p}\om_\gm(x)\,\mu(dx)\\
   &=\!-\!\!\sum_{\substack{
               \gm_1\in\Gm(n+1),\\
               \gm\sub\gm_1,\,p\in\gm_1\dd\gm
               }}
   (-1)^{k_p-1}\!\intl_X \om_\gm(x)\,(d_{e_p}f_{\gm_1}(x)+
   f_{\gm_1}(x)\,\beta_{e_p}^\mu(x))\,\mu(dx)=
   }
   \aaa{
   \label{30}
   = -\intl_X\suml_{\gm
   \in\Gm(n),\,p\notin\gm} (-1)^{k_p-1}\om_\gm(x)\,
   (d_{e_p}f_{\gm\cup p}(x)+ f_{\gm\cup p}(x)\,
   \beta_{e_p}^\mu(x))\,\mu(dx).
   }
  We changed the order of summation and integration
  when passing to the latter expression in~\eqref{30},
  and applied Lebesgue's theorem. Clearly,
  the sequence of partial sums
  under the last integral sign in~\eqref{30}
  is majorized by an integrable function. This
  follows from the definition of the spaces
  $A^{n+1}_p$ and $B^n_q$,
  from Cauchy-Bunyakovsky-Schwarz's inequality, and
  from the inequality
  \[
  \sqrt{\suml_{\gm\in\Gm(n)}\Bigl(\suml_{p\notin\gm}
  \beta_{e_p}^\mu(x)\,f_{\gm\cup p}(x)\Bigr)^2}\lt
  \sqrt{n+1}\,\|\beta^\mu(x)\|_\h\,\|f(x)\|_{n+1},
  \]
  which was proved, in fact, together with the estimate~\eqref{die Scheisse}.
  By the same argument, all the series
  in~\eqref{30} converge absolutely, and hence the order of summation of these series
  can be chosen arbitrary.
   We rewrite the expressions for
   $\dl f$ and $\beta^\mu\!\co f$:
   \begin{gather*}
   \dl f(x)=\suml_{p\in\gm_1\in\Gm(n+1)}
   d_{e_p}f_{\gm_1}(x)\,e_p\co e_{\gm_1}=
   \suml_{\gm\in\Gm(n)}\Bigl(\suml_{p\notin\gm}
   (-1)^{k_p-1}\,d_{e_p}f_{\gm\cup p}(x)\Bigr)e_\gm,\\
   \begin{split}
   \beta^\mu(x)\co f(x)&=\suml_{p\in\gm_1\in\Gm(n+1)}
   f_\gm(x)\,\beta_{e_p}^\mu(x)\,e_p\co e_{\gm_1}\\
   &=\suml_{\gm\in\Gm(n),\,p\notin\gm}
   (-1)^{k_p-1} f_{\gm\cup p}(x)\beta_{e_p}^\mu(x)\,e_\gm.
   \end{split}
   \end{gather*}
  This and~\eqref{30} imply the statement of the theorem.
  \end{proof}

  \section{The Stokes formula}

  \subsection{Assumtions and notation}

  \renewcommand{\nn}{n^{_{\pl V}}(x_0)}
  \renewcommand{\nm}{n^{_{\pl V}}(x)}
  As before, let
  $X$ be a locally convex space,
 $H$ be its vector subspace which is a Hilbert
 space relative to the scalar product
 $(\,\cdot\,,\,\cdot\,)$, and let
 $\{e_n\}_{n=1}^\infty$ be an orthonormal basis of $H$.
 We assume that
 $H$ is dense in $X$ and  the identical embedding of
 $H$ into $X$ is continuous.
 Let $\Xi_n$,  $n\in N$, denote
 the vector space of bounded differential forms of degree
 $n$ differentiable along $H$ and possessing bounded differentials.
 Let $S_n$  denote the space of
 differential forms of codegree $n$
 which are Radon measures differentiable along $H$.
 Also, we assume that
 the differential forms from $S_n$
 and their differentials are measures
 of bounded variation.
 Let  $\bar S_n$ and $\bar \Xi_n$, $n\in\Nnu$, denote
 pseudo-topological vector spaces
 of linear continuous functionals on $\Xi_n$ and $S_n$,
 respectively.
 We assume that  $\bar S_n$ and
 $\bar \Xi_n$ contain $S_n$ and $\Xi_n$
 as dense subsets.

 In addition to that, we assume that the mapping
 $d:\, \Xi_0 \to \Xi_1$
 can be extended to a continuous mapping
 $\bar \Xi_0 \to \bar \Xi_1$, and
 the mapping $d:\, S_1 \to S_{0}$, to a continuous
 mapping  $\bar S_1\to\bar S_{0}$.
 Further we assume that for every measure
 $\nu\in S_0$, the mapping
 $\Xi_1\to S_1,\; f\mto f\nu$ can be extended
 to a continuous mapping
 $\bar \Xi_1 \to \bar S_1$,
 and the mapping $\Xi_1\x S_1\to S_{0},\,
 (f,\om)\mto f\we \om$, to a continuous mapping
 $\bar\Xi_1\x\bar S_1\to\bar S_{0}$.
 We denote the extended mappings by the same symbols.
 When the functional $f\in \bar S_1$ acts
 on the element $g\in\Xi_1$, we write $\< f,g\>$.
 Further let us assume that every sequence of elements from
 $\Xi_n$, $n=1,2$, converging pointwise to an element from
 $\bar\Xi_n$, converges to this element also
 with respect to the $\bar\Xi_n$-topology.

 Let $V$ be  a domain in $X$  such that
 its boundary $\pl V$
 can be covered with a finite number of
 surfaces $\mc U_i$ of codimension $1$.
 Everywhere  below, a surface of codimension $1$ is the object
 defined in \cite{mine}, p. 552.
 We assume that the indicator $\ind_V$ of the domain $V$
 is an element of the space $\bar\Xi_0$.
 Let the sets
 $\,\mc U_i$ covering  $\pl V$ and their
 intersections possess the property~($*$) formulated in \cite{mine}, p. 559.
 In what follows, we will use the notations introduced in \cite{mine}.
 Here we briefly repeat their meaning:
 $n^{_{\pl V}}: \pl V \to  H$ is the normal vector
 (with respect to the $H$-topology) to $\pl V$;
 let $B\sub \pl V$ be a Borel subset, then
 $B^\eps = \{y\in X: y = x + tn^{_{\pl V}}(x), \, x\in B, |t|<\eps\}$
 is the $\eps$-layer of $B$;
 $\eps_b$ is the maximal number for which $\eps_b$-layers are well defined;
 $\nu^\eps$ is a measure on $\pl V$, $\nu^\eps(B)= \frac{\nu(B^\eps)}{2\eps}$;
 $P_{\sss\pl V}: (\pl V)^{\eps_b} \to \pl V: x+tn^{_{\pl V}}(x) \mto x$
 means the projector of $\eps$-layers to the surface;
 $\nu^{_{\pl V}}$ is the surface measure generated by the measure $\nu$
 (\cite{ugl}, \cite{mine}).
 Rigorous definitions of these objects as well as lemmas proving
 their existence are given in \cite{mine}.
 Note that by Theorem 2 of \cite{mine},
 $\lim_{\eps\to 0}\nu^\eps(\pl V)=\nu^{_{\pl V}}(\pl V)$.

 \subsection{A connection between a measure and the generated surface measure
  in terms of differential forms}

      \begin{thm}
       \label{t2}
       Let
        $\nu\in S_0$.
        Then
       $d\ind_V\cdot \, \nu$
       is an $H$-valued Radon measure on $X$ concentrated on
        $\pl V$.
        Moreover, $n^{_{\pl V}}\cdot\nu^{_{\pl V}}\in \bar S_1$,
        and the measures $\nu$ and $\nu^{_{\pl V}}$ are related
        through the identity:
       \eee{
       \label{main}
       d\ind_V \cdot\,\nu=-n^{_{\pl V}}\cdot\nu^{_{\pl V}}.
       }
      \end{thm}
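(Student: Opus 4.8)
The plan is to reduce the identity \eqref{main} to the surface layer theorem (Theorem 2 of \cite{mine}) by testing both sides against differential forms from $\Xi_1$, using the density and continuity hypotheses set up in \S 3.1. First I would verify that $d\ind_V\cdot\nu$ is a well-defined $H$-valued Radon measure: since $\ind_V\in\bar\Xi_0$ by assumption and $d$ extends to $\bar\Xi_0\to\bar\Xi_1$, the element $d\ind_V$ lies in $\bar\Xi_1$; then $\nu\in S_0$ and the extended multiplication map $\bar\Xi_1\to\bar S_1$, $f\mapsto f\nu$, shows $d\ind_V\cdot\nu\in\bar S_1$. To see that it is an honest measure concentrated on $\pl V$, I would approximate $\ind_V$ pointwise by smooth forms $\ffi_k\in\Xi_0$ (e.g. functions of the signed distance to $\pl V$ along $H$, equal to $1$ on $V$ away from a shrinking layer and $0$ outside a slightly larger layer); by the last assumption of \S 3.1, pointwise convergence in $\Xi_0$ forces $\bar\Xi_0$-convergence, so $d\ffi_k\to d\ind_V$ in $\bar\Xi_1$ and $d\ffi_k\cdot\nu\to d\ind_V\cdot\nu$ in $\bar S_1$. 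For each $k$, $d\ffi_k$ is supported in a layer of width $O(1/k)$ around $\pl V$ and points (up to sign) along $n^{_{\pl V}}$, so $d\ffi_k\cdot\nu$ is a genuine $H$-valued measure supported in that layer; the limit is therefore concentrated on $\pl V$.

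Next I would identify the limit explicitly. Writing $\ffi_k=\psi_k\circ\rho$ where $\rho$ is the signed $H$-normal coordinate near $\pl V$ and $\psi_k$ a one-dimensional mollified indicator, one computes $d\ffi_k = \psi_k'(\rho)\,d\rho$, and $d\rho$ restricted to the layer is, at the point $x+tn^{_{\pl V}}(x)$, the functional $n^{_{\pl V}}(x)$ (viewed in $H$). Hence for a test form $g\in\Xi_1$,
\[
\langle d\ffi_k\cdot\nu,\,g\rangle
=\int_{(\pl V)^{\eps}} \psi_k'(\rho(y))\,\bigl(g(y),n^{_{\pl V}}(P_{\sss\pl V}y)\bigr)\,\nu(dy).
\]
Disintegrating $\nu$ over the layer along the coordinate $t$ and using that $\psi_k'$ integrates to $1$ (with the appropriate sign convention making $\psi_k$ rise from $0$ to $1$ as one enters $V$), the inner $t$-integral produces exactly the layer average $\nu^\eps$, and letting $k\to\infty$ (so $\eps\to0$) Theorem 2 of \cite{mine} gives convergence of $\nu^\eps(\,\cdot\,)$ to the surface measure $\nu^{_{\pl V}}$. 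This yields
\[
\langle d\ind_V\cdot\nu,\,g\rangle=-\int_{\pl V}\bigl(g(x),n^{_{\pl V}}(x)\bigr)\,\nu^{_{\pl V}}(dx),
\]
the sign $-$ coming from the orientation of the outward normal versus the direction of increase of $\ffi_k$. In particular the right-hand side is a continuous functional on $\Xi_1$, which is what it means for $n^{_{\pl V}}\cdot\nu^{_{\pl V}}$ to belong to $\bar S_1$, and since the formula holds for all $g\in\Xi_1$ and $\Xi_1$ is dense in $S_1$, it establishes \eqref{main} as an identity in $\bar S_1$.

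The main obstacle I anticipate is the careful handling of the normal coordinate and the disintegration in the non-metrizable locally convex setting: one must check that the signed $H$-distance $\rho$ is well defined and differentiable along $H$ on the $\eps_b$-layer (this is where property~($*$) and the covering by codimension-one surfaces $\mc U_i$ from \cite{mine} enter, via a partition of unity subordinate to the $\mc U_i$), that $d\ffi_k$ really has the claimed form, and that the dominated-convergence / Lebesgue-theorem steps in passing the limit through $\langle\,\cdot\,,g\rangle$ are justified using the boundedness of $g$ and the bounded variation of $\nu$. The orientation bookkeeping — making sure the sign in \eqref{main} matches the convention for $n^{_{\pl V}}$ as the \emph{outward} normal and for $d\ind_V$ — also needs to be pinned down once and used consistently; everything else is an application of the machinery already imported from \cite{mine} together with the continuity and density assumptions of \S 3.1.
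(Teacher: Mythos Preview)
Your overall strategy coincides with the paper's: approximate $\ind_V$ by smooth mollified indicators $f_\eps$ built from the normal coordinate, compute $df_\eps$, pair against $g\in\Xi_1$, and pass to the limit via the surface layer theorem of \cite{mine}. Two technical points, however, are not just obstacles to be noted but are where the real work lies, and your sketch does not supply them.

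First, your ``inner $t$-integral'' step implicitly uses a disintegration of $\nu$ along the normal fibres so that a general bump $\psi_k'$ integrating to~$1$ produces the layer average. No such Fubini theorem is assumed in this setting. The paper avoids the issue entirely by choosing $h^\eps$ with \emph{constant} slope $-\frac{1}{2\eps}$ on $(-(\eps-\eps^2),\eps-\eps^2)$; then $df_\eps(x+\tau n^{_{\pl V}}(x))=-\frac{1}{2\eps}n^{_{\pl V}}(x)$, and after freezing $g$ along fibres one gets
\[
\int_{(\pl V)^\eps}\frac{1}{2\eps}\,(n^{_{\pl V}}(P_{\sss\pl V}y),g(P_{\sss\pl V}y))\,\nu(dy)=\int_{\pl V}(n^{_{\pl V}}(x),g(x))\,\nu^\eps(dx)
\]
directly from the definition $\nu^\eps(B)=\nu(B^\eps)/2\eps$, with no disintegration needed. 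The remaining $\eps^2$-thick rims are controlled separately.

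Second, and more seriously, you invoke Theorem~2 of \cite{mine} as if it gave weak convergence $\nu^\eps\Rightarrow\nu^{_{\pl V}}$. It does not: it yields only $\nu^\eps(B)\to\nu^{_{\pl V}}(B)$ for the full surface and for sets with property~($*$). To conclude $\int_{\pl V}\ffi\,d\nu^\eps\to\int_{\pl V}\ffi\,d\nu^{_{\pl V}}$ for $\ffi=(n^{_{\pl V}},g)$, the paper runs a genuine $\eps/3$ argument: use the Radon property of $\nu^{_{\pl V}}$ to find a compact $K_\sg\sub\pl V$, cover it by finitely many neighbourhoods $U_i$ on which $\ffi$ oscillates by at most $\sg'$ and which have property~($*$), approximate $\ffi$ by a step function on the disjointified $B_i$, apply the surface layer theorem to each $B_i$, and estimate the tail and the approximation error. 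This step is the analytic core of the proof and cannot be absorbed into ``dominated convergence''.
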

   Note that by assumption, $d\ind_V \in \bar\Xi_1$
   and $d\ind_V \cdot\, \nu \in \bar S_1$.
 \begin{proof}
 Let $h^\eps: (-\eps_b,\eps_b)\to [0,1]$, $\eps<\eps_b$,
 \[
 h^\eps(\tau)=
 \begin{cases}
 -\frac{\tau}{2\eps}+\frac12~, &\text{if}\;
 \tau\in(-(\eps-\eps^2),\eps-\eps^2),\\
 1, &\text{if}\; \tau\in(-\eps_b,-\eps),\\
 0, &\text{if}\; \tau\in(\eps,\eps_b),
 \end{cases}
 \]
 be $\C^\infty$-smooth  functions such that
 on the intervals
 $(-\eps,-(\eps-\eps^2))$ and
 $(\eps-\eps^2,\eps)$,
 the absolute values of their derivatives
 change monotonically
 from $0$ to $\frac1{2\eps}$, and from
 $\frac1{2\eps}$ to $0$, respectively.
 For $\eps<\eps_b$, we define the functions
 $f^{\eps}: X\to \Rnu$,
 \aaa{
 \label{feps}
 f^{\eps}(x)=
 \begin{cases}
 h^\eps(\tau), &\text{if}
 \; x=y+\tau n^{_{\pl V}}(y),\, y\in\pl V,\,
 \tau\in (-\eps_b,\eps_b),\\
 1, &\text{if} \; x\in V \dd (\pl V)^{\eps_b},\\
 0, &\text{if} \; x\notin V \cup (\pl V)^{\eps_b}.
 \end{cases}
 }
  Let us calculate
 \[
  d_{e_p}f^\eps(x+\tau \nm)=
 \left.\frac{d}{dt}
 f^\eps(x+\tau \nm+te_p)\right|_{t=0}
 \]
  for
 $\tau\in(-(\eps-\eps^2),\eps-\eps^2)$ and
 $x\in\pl V$.
 If $t$ is sufficiently small, then there exist $x_t$ and $\tau_t$,
 such that
 \eee{
 \label{1}
 x+\tau \nm+te_p
 =x_t+\tau_tn^{_{\pl V}}(x_t).
 }
 Let
 $n_p^{_{\pl V}}$ be coordinates of the vector $n^{_{\pl V}}$
 in the basis $\{e_p\}_{p=1}^\infty$.
 Subtracting $x_t$ from the both sides of \eqref{1},
 and multiplying by $\nm$,
 we obtain:
 \[
 (x-x_t,\nm)+\tau+tn_p^{_{\pl V}}(x)
 =\tau_t(\nm,n^{_{\pl V}}(x_t)).
 \]
 Hence,
 \eee{
 \label{2}
 \tau_t=\frac{tn^{_{\pl V}}_p(x)}
 {(\nm,n^{_{\pl V}}(x_t))}
 +\frac{\tau+(x-x_t,\nm)}
 {(\nm,n^{_{\pl V}}(x_t))}.
 }
 Note that
 $x_t=P_{\sss\pl V}(x+\tau \nm+te_p)$.
 We can prove that the derivative
 $\left.\frac{d}{dt}x_t\right|_{t=0}$
 exists with respect to the $H$-topology
 similarly to how it was done in
 the proof of Lemma~6 of~\cite{mine}.
 From the results of~\cite{mine} (Lemmas 1 and 2),
 it follows that the derivative
 $\left.\frac{d}{dt}n^{_{\pl V}}(x_t)\right|_{t=0}$
 exists with respect to the $H$-topology as well.
 Taking into account this, we show that
 $\left.\frac{d}{dt}
 (\nm,n^{_{\pl V}}(x_t))\right|_{t=0}=0$
 and $\left.\frac{d}{dt}
 (x-x_t,\nm)\right|_{t=0}=0$.
 The latter identity is obvious since
 $\left.\frac{d}{dt}x_t\right|_{t=0}\in H_x$,
 and $\nm$ is orthogonal to  $H_x$, where
 $H_x$ is the intersection of the tangent space at $x\in \pl V$
 with $H$ (see \cite{mine}).
 Further we have:
 \mm{
 0=\left.\frac{d}{dt}\|n^{_{\pl V}}(x_t)\|^2\right|_{t=0}
 =2\Bigl(\nm,
 \left.\frac{d}{dt}n^{_{\pl V}}(x_t)\right|_{t=0}\Bigr)\\ =
 2\left.\frac{d}{dt}(\nm,
 n^{_{\pl V}}(x_t))\right|_{t=0}.
 }
 From this and from~\eqref{2}, it follows that
 $\left.\frac{d}{dt}\tau_t
 \right|_{t=0}=n_p^{_{\pl V}}(x)$.
 Taking into account that
 $f^\eps(x+\tau \nm+te_p)=
 -\frac{\tau_t}{2\eps}+\frac12$,
 we obtain that
 $d_{e_p}f^\eps(x+\tau \nm)=
 -\frac{n_p^{_{\pl V}}(x)}{2\eps}$, and hence,
 \eee{
 \label{4}
 df^\eps(x+\tau n^{_{\pl V}}(x))=-
 \frac{\nm}{2\eps}.
 }
 For $\tau$ which belongs to one of the intervals
 $(-\eps,-(\eps-\eps^2))$ or
 $(\eps-\eps^2,\eps)$,
 the differential
 $df^\eps(x+\tau \nm)$
 can be calculated in the same way.
 Indeed,
 $f^\eps(x+\tau \nm+te_p)=
 h^\eps(\tau_t)$, and
 \eee{
 \label{3}
 df^\eps(x+\tau \nm)=
 (h^\eps)'(\tau)\nm.
 }
 This implies that
 \aa{
 \|df^\eps(x)\|<\frac1{2\eps} \quad
 \text{for all}\; x\in X.
 }
 Note that as $\eps\to 0$, $f_\eps \to \ind_V$
 pointwise, and hence with respect to the $\bar \Xi_0$-topology.
 By assumption, $df_\eps \to d\ind_V$ in the $\bar \Xi_1$-topology.
 Let $g\in \Xi_1$.
 We have:
 \mm{
 \<d\ind_V\cdot\nu,g\>
 = \lim_{\eps\to 0} \<df_\eps \cdot\, \nu, g\>
 = \lim_{\eps\to 0}
 \int_{X}(df_\eps(x),g(x))\nu(dx) \\
 =\lim_{\eps\to 0}
 \int_{(\pl V)^{\eps}}(df_\eps(x),g(x))\nu(dx).
 }
 Let
 $x\in \pl V$.
 The function $[0,\eps_b) \to \Rnu$, $t\mto (\nm,g(x+t\nm))$
 is differentiable.
 By assumption,
 $g$ has a bounded derivative, say by a constant $M$,
 along $H$.
 For all
 $x\in\pl V$, $t \in (-\eps,\eps)$, we obtain:
 \mm{
 |(\nm,g(x+t\nm))-(\nm,g(x))|\\
 \lt \Bigl|\Bigl(\nm,\frac{d}{dt}g(x+t\nm)
 |_{t=t_0}\Bigr)\Bigr|\cdot t
 \lt \|g'(x+t_0\nm)\nm\|\cdot t\\
 \lt \|g'(x+t_0\nm)\|_2\cdot t<M\eps,
 }
 where $t_0<t$.
 Define a function
 $\td g:\, (\pl V)^{\eps_b}\to H$
 in the following way: for
 $x\in\pl V$, $t \in (-\eps_b,\eps_b)$, we set
 $\td g(x+t\nm)=g(x)$.
 Then, taking into account the above sequence of
 inequalities, formulas \eqref{4}, \eqref{3},
 and the definition of
 $h^\eps$, for all $x\in (\pl V)^\eps$
 we obtain that
 \[
 |(df^\eps(x),g(x))-(df^\eps(x),\td g(x))|<\frac{M}2.
 \]
 This implies:
 \aaa{
 \label{8}
 \begin{split}
 \<(d\ind_V)\cdot\nu,g\>&=
 \lim_{\eps\to 0}
 \int_{(\pl V)^{\eps}}(df_\eps(x),\td g(x))\nu(dx)\\
 &=-\lim_{\eps\to 0}\int_{(\pl V)^{\eps}}
 \frac1{2\eps}\,(n^{_{\pl V}}(P_{\sss \pl V}x),
 g(P_{\sss \pl V}x))\nu(dx)\\
 &\quad +
 \lim_{\eps\to 0}\int_{(\pl V)^{\eps}\dd (\pl V)^{\eps-\eps^2}}
 \bigl(df_\eps(x)-\frac1{2\eps}\,n^{_{\pl V}}(P_{\sss \pl V}x),\td g(x)\bigr)\nu(dx)
 \\
 &=-\lim_{\eps\to 0}\int_{\pl V}(\nm,g(x))\nu^\eps(dx).
 \end{split}
 }
 Indeed,
 \aa{
 \int_{(\pl V)^{\eps}}
 \frac1{2\eps}\,(n^{_{\pl V}}(P_{\sss \pl V}x),
 g(P_{\sss \pl V}x))\nu(dx)
 = \int_{\pl V}(\nm,g(x))\nu^\eps(dx),
 }
 and as $\eps\to 0$,
 \mm{
 \Bigl|\int_{(\pl V)^{\eps}\dd (\pl V)^{\eps-\eps^2}}
 (df_\eps(x)-\frac1{2\eps}n^{_{\pl V}}
 (P_{\sss \pl V}x),\td g(x))\nu(dx)\Bigr|\\
 \lt M\,
 \frac{\nu((\pl V)^{\eps}\dd (\pl V)^{\eps-\eps^2})}{\eps}=
 2\,\Bigl(
 \frac{\nu((\pl V)^\eps)}{2\eps}-
 \frac{\nu((\pl V)^{\eps-\eps^2})}{2(\eps-\eps^2)}
 \frac{\eps-\eps^2}{\eps}\Bigr)\to 0.
 }
 Further, we fix an arbitrary
 $\sg>0$, and let
 $\sg'=\frac{\sg}{2(M+\nu^{\sss \pl V}(\pl V))}$.
 Since
 $\nu^{_{\pl V}}$
 is a Radon measure
 (see \cite{ugl}, \cite{ugl4}),
 then there exists a compact
 $K_\sg\sub \pl V$, such that
 $\nu^{_{\pl V}}(\pl V\dd K_\sg)<\sg'$.
 For each point
 $x_0\in K_\sg$ we fix a neighborhood $U_{x_0}$,
 which is contained in one of
 $\mc U_i$, possesses the property~($*$)
 formulated in \cite{mine},
 and such that for the function
 $\ffi(x)=(\nm,g(x))$, the inequality
 $|\ffi(x)-\ffi(x_0)|<\sg'$ holds for all
 $x\in U_{x_0}$.
 We choose a finite number of neighborhoods
 $U_x$, $x\in K_\sg$, covering $K_\sg$
 (let them be neighborhoods $U_i$ of points $x_i$),
 and denote their union by
 $O_\sg$.
 It is clear that
 $\nu^{_{\pl V}}(\pl V\dd O_\sg)<\sg'$, and by
 the construction of
 $O_\sg$, there exists the limit
 $\lim_{\eps\to 0}\nu^\eps(O_\sg)
 =\nu^{_{\pl V}}(O_\sg)$.
 Hence the limit
 $\lim_{\eps\to 0}\nu^\eps(\pl V \dd O_\sg)
 =\nu^{_{\pl V}}(\pl V\dd O_\sg)$ exists too.
 Further let
 $B_i=U_i\dd \bu_{j=1}^{i-1}U_j$,
 and $\ffi_\sg:\, O_\sg\to\Rnu$ be such that
 $\ffi_\sg=\suml_i\ffi(x_i)\ind_{B_i}$.
 It is clear that on $O_\sg$,
 $|\ffi(x)-\ffi_\sg(x)|<\sg'$.
 We have:
 \mmm{
 \label{6}
 \lim_{\eps\to 0}\int_{\pl V}\ffi(x)\nu^\eps(dx)
 =\lim_{\eps\to 0}\int_{O_\sg}\ffi_\sg(x)\nu^\eps(dx)\\+
 \lim_{\eps\to 0}\Bigl(
 \int_{O_\sg}(\ffi(x)-\ffi_\sg(x))\nu^\eps(dx)
 +\int_{\pl V\dd O_\sg}\ffi(x)\nu^\eps(dx)\Bigr).
 }
 By the definition of
 $\ffi_\sg$,
 $
 \int_{O_\sg}\ffi_\sg(x)\nu^\eps(dx)=\suml_i
 \ffi(x_i)\nu^\eps(B_i)
 $,
 where the sum contains a finite number of terms.
 We observe that for every set $B_i$,
 $\lim_{\eps\to 0}\nu^\eps(B_i)=\nu^{\sss\pl V}(B_i)$
 by the construction of $B_i$ and by Theorem 2 of \cite{mine}.
 Hence  $\lim_{\eps\to 0}\int_{O_\sg}\ffi_\sg(x)\nu^\eps(dx)
 =\int_{O_\sg}\ffi_\sg(x)\nu^{\sss\pl V}(dx)$.
 The limit of the second term in~\eqref{6} exists
 by the existence of the two other limits.  We continue~\eqref{6}:
 \mm{
 \lim_{\eps\to 0}\int_{\pl V}\ffi(x)\nu^\eps(dx)=
 \int_{\pl V}\ffi(x)\nu^{\sss\pl V}(dx)\\-
 \int_{\pl V\dd O_\sg}\ffi(x)\nu^{\sss\pl V}(dx)-
 \int_{O_\sg}(\ffi(x)-\ffi_\sg(x))\nu^{\sss\pl V}(dx)\\+
 \lim_{\eps\to 0}\Bigl(
 \int_{O_\sg}(\ffi(x)-\ffi_\sg(x))\nu^\eps(dx)
 +\int_{\pl V\dd O_\sg}\ffi(x)\nu^\eps(dx)\Bigr).
 }
 Let us estimate the last three terms. We have:
 \[
 \Bigl|
 \int_{O_\sg}(\ffi(x)-\ffi_\sg(x))\nu^\eps(dx)
 +\int_{\pl V\dd O_\sg}\ffi(x)\nu^\eps(dx)\Bigr|\lt
 \sg'\nu^\eps(O_\sg)+M\nu^\eps(\pl V\dd O_\sg).
 \]
 Passing to the limit as $\eps\to 0$ in the both sides of this
 inequality, and taking into account that the limit on the
 left-hand side exists, we obtain:
 \mm{
 \Bigl|\lim_{\eps\to 0}\Bigl(
 \int_{O_\sg}(\ffi(x)-\ffi_\sg(x))\nu^\eps(dx)
 +\int_{\pl V\dd O_\sg}\ffi(x)\nu^\eps(dx)\Bigr)\Bigr|
 \lt \sg'\nu^{\sss\pl V}(\pl V)+M\sg'.
 }
 Analogously, we prove the two other estimates:
 \[
 \Bigl|\int_{\pl V\dd O_\sg}\ffi(x)\nu^{\sss\pl V}(dx)\Bigr|
 < M\sg',\qquad
 \Bigl|
 \int_{O_\sg}(\ffi(x)-\ffi_\sg(x))\nu^{\sss\pl V}(dx)
 \Bigr|< \sg'\nu^{\sss\pl V}(\pl V).
 \]
 From this it follows that
 \[
 \Bigl|
 \lim_{\eps\to 0}\int_{\pl V}\ffi(x)\nu^\eps(dx)
 -\int_{\pl V}\ffi(x)\nu^{\sss\pl V}(dx)
 \Bigr|<\sg.
 \]
 Since $\sg > 0$ was chosen arbitrary, we conclude that
 \[
 \lim_{\eps\to 0}\int_{\pl V}(\nm,g(x))\nu^\eps(dx)
 =\int_{\pl V}(\nm,g(x))\nu^{\sss\pl V}(dx).
 \]
 Together with~\eqref{8} this implies that
 \[
 \<(d\ind_V)\cdot\nu,g\>=-\int_{\pl V}(\nm,g(x))\nu^{\sss\pl V}(dx)
 \]
 which is equivalent to~\eqref{main}. The theorem is proved.
 \end{proof}
 \begin{cor}
 \label{cor1}
 Let $\om = \sum_{p=1}^\infty \om_p e_p \in S_1$,
 and $\om^{_{\pl V}} = \sum_{p=1}^\infty \om_p^{_{\pl V}} e_p \in \bar S_1$,
 where $\om_p^{_{\pl V}}$ are the surface measures generated
 by the measures $\om_p$.
 Then
 $n^{_{\pl V}}\in \bar \Xi_1$, and the measures $\om$ and $\om^{_{\pl V}}$
 are related through the identity:
 \aa{
 d\ind_V\we\, \om=-\,n^{_{\pl V}}\we\,\om^{_{\pl V}}.
 }
 \end{cor}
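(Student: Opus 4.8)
The plan is to reduce the corollary to Theorem~\ref{t2} applied to each scalar component of $\om$ and then to sum the resulting identities. Decompose $\om=\sum_{p=1}^{\infty}\om_p e_p$ with $\om_p\in S_0$; the partial sums converge to $\om$ in $\bar S_1$ (in fact in total variation), so by the assumed continuity of the exterior product $\bar\Xi_1\x\bar S_1\to\bar S_0$ in its second argument, $d\ind_V\we\om=\sum_{p}d\ind_V\we(\om_p e_p)$. Each $\om_p e_p$ belongs to $S_1$, and inspecting the definitions of $\we$ and $\co$ for a degree-$1$ form paired with a codegree-$1$ form shows that the scalar measure $d\ind_V\we(\om_p e_p)$ is exactly the $e_p$-component $e_p\we(d\ind_V\cdot\om_p)$ of the $H$-valued measure $d\ind_V\cdot\om_p$; this identity, immediate for forms with integrable coefficients, extends to the relevant objects of $\bar S_0$ and $\bar S_1$ by density and the continuity of the operations.

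Applying Theorem~\ref{t2} with $\nu=\om_p\in S_0$ gives, for each $p$, that $n^{_{\pl V}}\cdot\om_p^{_{\pl V}}\in\bar S_1$ and $d\ind_V\cdot\om_p=-n^{_{\pl V}}\cdot\om_p^{_{\pl V}}$. Taking the $e_p$-component of both sides, i.e., applying $e_p\we(\cdot)$, and using the identification above, one obtains $d\ind_V\we(\om_p e_p)=-e_p\we(n^{_{\pl V}}\cdot\om_p^{_{\pl V}})$, whose right side is the scalar measure $A\mapsto-\int_A n_p^{_{\pl V}}(x)\,\om_p^{_{\pl V}}(dx)$, where $n_p^{_{\pl V}}$ is the $p$-th coordinate of the normal field. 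Summing over $p$, and invoking once more the continuity of the exterior product together with the hypothesis that $\om^{_{\pl V}}=\sum_p\om_p^{_{\pl V}}e_p$ converges in $\bar S_1$, the left side sums to $d\ind_V\we\om$ and the right side to $-n^{_{\pl V}}\we\om^{_{\pl V}}$, which is the asserted identity, provided $n^{_{\pl V}}$ is known to define an element of $\bar\Xi_1$.

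For that last point, the functional on $S_1$ attached to $n^{_{\pl V}}$ sends $\mu=\sum_p\mu_p e_p$ to $\sum_p\int_{\pl V}n_p^{_{\pl V}}(x)\,\mu_p^{_{\pl V}}(dx)=\int_{\pl V}(n^{_{\pl V}}(x),\mu^{_{\pl V}}(dx))$, where $\mu^{_{\pl V}}=\sum_p\mu_p^{_{\pl V}}e_p$; each $n^{_{\pl V}}\cdot\mu_p^{_{\pl V}}$ already lies in $\bar S_1$ by Theorem~\ref{t2}, and since $\|n^{_{\pl V}}\|=1$ the functional is bounded by the total variation of the $H$-valued surface measure $\mu^{_{\pl V}}$, which is controlled by a seminorm of $\mu$ via the surface-measure estimates of~\cite{mine} — exactly as in the proof of Theorem~\ref{t2}, where the functional $g\mapsto\int_{\pl V}(n^{_{\pl V}}(x),g(x))\,\nu^{_{\pl V}}(dx)$ on $\Xi_1$ was obtained as the limit of $df^{\eps}\cdot\nu$ as $\eps\to 0$. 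Hence $n^{_{\pl V}}\in\bar\Xi_1$ and $n^{_{\pl V}}\we\om^{_{\pl V}}$ is legitimate.

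I expect the main obstacle to be the bookkeeping inside the pseudo-topological spaces $\bar\Xi_n$ and $\bar S_n$ rather than any individual computation: one must justify that the series defining $\om$ and $\om^{_{\pl V}}$ commute with the jointly continuous exterior product and with the component-extraction maps $\mu\mapsto e_p\we\mu$, and that the componentwise identity $d\ind_V\we(\om_p e_p)=e_p\we(d\ind_V\cdot\om_p)$, transparent for classical forms, survives passage to the extended functionals. Pinning down $n^{_{\pl V}}$ as an element of $\bar\Xi_1$, though essentially contained in the proof of Theorem~\ref{t2}, is the other point to be spelled out with care.
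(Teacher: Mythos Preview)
Your overall plan—apply Theorem~\ref{t2} to each scalar coefficient $\om_p$ and sum, using the assumed continuity of $\we:\bar\Xi_1\times\bar S_1\to\bar S_0$—is exactly what the paper does; its proof of the identity is essentially the one-line computation
\[
d\ind_V\we\om=\sum_{p=1}^\infty (d\ind_V\cdot\om_p)\we e_p
=-\sum_{p=1}^\infty(n^{_{\pl V}}\cdot\om_p^{_{\pl V}})\we e_p
=-\,n^{_{\pl V}}\we\om^{_{\pl V}}.
\]

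The one place you diverge, and where there is a genuine confusion, is the argument that $n^{_{\pl V}}\in\bar\Xi_1$. Under the embedding $\Xi_1\hookrightarrow\bar\Xi_1$, the functional on $S_1$ represented by (the extension by zero of) $n^{_{\pl V}}$ is $\mu\mapsto\int_X(n^{_{\pl V}}(x),\mu(dx))$, \emph{not} $\mu\mapsto\int_{\pl V}(n^{_{\pl V}}(x),\mu^{_{\pl V}}(dx))$; these are different objects, and you are mixing up the action of $n^{_{\pl V}}\in\bar\Xi_1$ on $S_1$ with the action of $n^{_{\pl V}}\cdot\nu^{_{\pl V}}\in\bar S_1$ on $\Xi_1$ that was established in Theorem~\ref{t2}. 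Your continuity estimate via ``surface-measure bounds on $\mu^{_{\pl V}}$ in terms of $\mu$'' is therefore aimed at the wrong functional. The paper's route is both shorter and sidesteps this pitfall: extend $n^{_{\pl V}}$ by zero off $\pl V$ and read off from~\eqref{4} that $-2\eps\,df_\eps\to n^{_{\pl V}}$ pointwise as $\eps\to 0$, where the $f_\eps$ are the approximants~\eqref{feps} from the proof of Theorem~\ref{t2}; the standing hypothesis that pointwise convergence of a sequence in $\Xi_1$ to an element of $\bar\Xi_1$ implies $\bar\Xi_1$-convergence then gives $n^{_{\pl V}}\in\bar\Xi_1$ immediately, with no appeal to surface-measure continuity estimates.
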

 \begin{proof}
 Let us prove that $n^{_{\pl V}}\in \bar \Xi_1$.
 Indeed, $n^{_{\pl V}}$ originally defined on $\pl V$ can be extended
 to $X$ by setting $n^{_{\pl V}}(x) = 0$ for $x\notin \pl V$.
 Let us consider the functions $f_\eps$ defined by \eqref{feps}.
 By \eqref{4}, $-2\eps\, df_\eps$ converges to $n^{_{\pl V}}$
 pointwise, and hence with respect to the $\bar \Xi_1$-topology by assumption.
 Note that by assumption, the operation $\we$
 can be extended from $\Xi_1 \x S_1$ to
 $\bar \Xi_1 \x \bar S_1$ so that $(f,\om)\mto f\we\om$
 is a continuous mapping $\bar \Xi_1 \x \bar S_1 \to \bar S_0$.
 Applying
 Theorem~\ref{t2} to each pair of
 real-valued measures $\om_p$ and
 $\om_p^{_{\pl V}}$ we obtain:
 \aa{
  d\ind_V\we\, \om = \sum_{p=1}^\infty d\ind_V \cdot \,\om_p \we\, e_p
  = - \sum_{p=1}^\infty (n^{_{\pl V}}\cdot\,\om^{_{\pl V}}_p) \we\, e_p
  = -\,n^{_{\pl V}}\we\,\om^{_{\pl V}}.
 }
 \end{proof}

 \subsection{Derivation of the Stokes formula}
 \begin{df}
 \label{def7}
 Let $\om\in S_1$ and $\om^{_{\pl V}}\in \bar S_1$.
 We define the integral of $\om$
 over the surface
 $\pl V$  by the identity:
 \[
 \int_{\pl V}\om=\int_{\pl V}(n^{_{\pl V}}(x),\om^{_{\pl V}}(dx)).
 \]
 \end{df}
 \begin{thm}[The Stokes formula]
 Let $\om\in S_1$ and $\om^{_{\pl V}}\in \bar S_1$, then
 \[
 \int_{\pl V}\om=\int_Vd\om.
 \]
 \end{thm}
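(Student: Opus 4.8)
The plan is to imitate the classical proof of Stokes' theorem. The key tool is the Leibniz rule of Lemma~\ref{lemP}: approximating $\ind_V$ by the smooth cut-offs $f^\eps$ constructed in the proof of Theorem~\ref{t2}, I would apply it to $f^\eps\we\om$, evaluate the resulting identity on the whole space $X$ — where the exterior differential of a measure has zero total mass — and then let $\eps\to0$, invoking Corollary~\ref{cor1} and Definition~\ref{def7}.

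To begin, note that each $f^\eps$ lies in $\Xi_0$: it takes values in $[0,1]$, it is differentiable along $H$, and by~\eqref{4} and~\eqref{3} one has $\|df^\eps(x)\|<\frac1{2\eps}$ for all $x$ (and $df^\eps\in\Xi_1$, as already used in the proof of Corollary~\ref{cor1}). Hence Lemma~\ref{lemP}, applied with the degree-$0$ form $g=f^\eps$ — so $g$ and $dg$ are bounded — and the codegree-$1$ measure $\om\in S_1$ — so $\om$ and $d\om$ are of bounded variation, with $n=0$ in the notation of the lemma — shows that $f^\eps\we\om$ possesses a differential and
\eee{
\label{:leib}
d(f^\eps\we\om)=f^\eps\we d\om+df^\eps\we\om ,
}
where $f^\eps\we\om=f^\eps\cdot\om$ and $f^\eps\we d\om=f^\eps\cdot d\om$ are scalar-times-measure products and all three terms are differential forms of codegree $0$, i.e. scalar measures on $X$. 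Evaluating~\eqref{:leib} on $X$, the left-hand side vanishes, since $d(f^\eps\we\om)=\suml_p d_{e_p}(f^\eps\om_p)$ and the derivative of a differentiable measure $\lambda$ in a direction $e_p$ has zero total mass ($\lambda(X+te_p)=\lambda(X)$ for all $t$). Thus
\eee{
\label{:bal}
\int_X f^\eps(x)\,(d\om)(dx)=-\bl df^\eps\we\om\br(X)=-\int_X\bl df^\eps(x),\om(dx)\br .
}

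It remains to pass to the limit $\eps\to0$ in~\eqref{:bal}. On the left, $f^\eps\to\ind_V$ pointwise and $|f^\eps|\lt1$, so dominated convergence (recall $d\om\in S_0$ is of bounded variation) gives $\int_X f^\eps(x)\,(d\om)(dx)\to(d\om)(V)=\int_V d\om$. On the right, as in the proof of Theorem~\ref{t2} one has $f^\eps\to\ind_V$ in $\bar\Xi_0$ and hence $df^\eps\to d\ind_V$ in $\bar\Xi_1$; by continuity of the extension of the exterior product $\bar\Xi_1\x\bar S_1\to\bar S_0$ (with $\om$ fixed) this yields $df^\eps\we\om\to d\ind_V\we\om$ in $\bar S_0$, so, evaluating these functionals at the constant function $1\in\Xi_0$, $\bl df^\eps\we\om\br(X)=\<df^\eps\we\om,1\>\to\<d\ind_V\we\om,1\>$. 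By Corollary~\ref{cor1}, $d\ind_V\we\om=-n^{_{\pl V}}\we\om^{_{\pl V}}$, and $n^{_{\pl V}}\we\om^{_{\pl V}}=\suml_p n_p^{_{\pl V}}\cdot\om_p^{_{\pl V}}$ is the genuine scalar measure whose total mass is $\int_{\pl V}\bl n^{_{\pl V}}(x),\om^{_{\pl V}}(dx)\br=\int_{\pl V}\om$ by Definition~\ref{def7}. Combining all this, $\int_V d\om=-\lim_{\eps\to0}\bl df^\eps\we\om\br(X)=-\<d\ind_V\we\om,1\>=\<n^{_{\pl V}}\we\om^{_{\pl V}},1\>=\int_{\pl V}\om$, which is the assertion.

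The routine parts are~\eqref{:leib} (which is Lemma~\ref{lemP} in the special case of degrees $(0,1)$) and the vanishing of the total mass of $d(f^\eps\we\om)$. The main obstacle I anticipate is the limit in the boundary term: one has to check that the extensions entering here — of $d\colon\Xi_0\to\Xi_1$ to $\bar\Xi_0\to\bar\Xi_1$, of $\we\colon\Xi_1\x S_1\to S_0$ to $\bar\Xi_1\x\bar S_1\to\bar S_0$, and the identification (supplied by Corollary~\ref{cor1}) of the element $d\ind_V\we\om\in\bar S_0$ with $-n^{_{\pl V}}\we\om^{_{\pl V}}$ — fit together, so that $\lim_{\eps\to0}\bl df^\eps\we\om\br(X)$ really is the value of the functional $d\ind_V\we\om$ at $1\in\Xi_0$. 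This is where the standing assumptions are used: that a sequence in $\Xi_n$ converging pointwise to an element of $\bar\Xi_n$ converges there as well, and that point evaluations are continuous on $\bar S_0$.
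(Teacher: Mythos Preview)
Your argument is correct and follows essentially the same route as the paper: apply Lemma~\ref{lemP} to $f^\eps$ and $\om$, use that $d(f^\eps\we\om)(X)=0$, pass to the limit via the assumed continuity of $d$ and $\we$ on the extended spaces, and identify the boundary term through Corollary~\ref{cor1} and Definition~\ref{def7}. Your write-up is in fact more explicit than the paper's in justifying the vanishing of the total mass of $d(f^\eps\we\om)$ and in invoking dominated convergence for the bulk term.
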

 \begin{proof}
 Corollary \ref{cor1} and Definition \ref{def7} imply:
 \aa{
 \int_{\pl V}\om = \int_{\pl V}(n^{_{\pl V}}(x),\om^{_{\pl V}}(dx))
 = -\<d\ind_V\we\, \om, 1\>.
 }
 Let us consider again the functions $f_\eps$ defined by \eqref{feps}.
 We proved that $f_\eps \to \ind_V$ pointwise
 and in the $\bar \Xi_0$-topology,
 and that $df_\eps \to d\ind_V$ in the $\bar \Xi_1$-topology.
 By Lemma \ref{lemP},
 \[
 0= d(f_\eps\we\,\om)(X)=(df_\eps\we\,\om)(X)+(f_\eps\we
 \,d\om)(X).
 \]
 Hence,
 \[
 \int_Vd\om= \lim_{\eps\to 0} (f_\eps \we\, d\om)(X)=
  -\lim_{\eps\to 0}(df_\eps\we\,\om)(X) = - \<d\ind_V\we\, \om, 1\>
 =\int_{\pl V}\om.
 \]
 The theorem is proved.
 \end{proof}

\end{document}